%
\input ./style/arxiv-vmsta.cfg
\documentclass[numbers,compress,v1.0.1]{vmsta}

\volume{4}
\issue{3}
\pubyear{2017}
\firstpage{189}
\lastpage{198}
\doi{10.15559/17-VMSTA82}



\startlocaldefs

\newtheorem{theorem}{Theorem}

\newtheorem{proposition}[theorem]{Proposition}

\theoremstyle{definition}
\newtheorem{remark}[theorem]{Remark}

\def\N{\mathbb{N}}
\def\R{\mathbb{R}}

\def\ex{\mathbb{E}}
\newcommand\cadlag{c\`adl\`ag }

\urlstyle{rm}
\allowdisplaybreaks
\endlocaldefs

\begin{document}
\begin{frontmatter}

\title{The self-normalized Donsker theorem revisited}

\author{\inits{P.}\fnm{Peter}\snm{Parczewski}}\email{parczewski@math.uni-mannheim.de}
\address{University of Mannheim, Institute of Mathematics A5,6, D-68131~Mannheim,~Germany}

\markboth{P. Parczewski}{The self-normalized Donsker theorem revisited}

\begin{abstract}
We extend the Poincar\'{e}--Borel lemma to a weak approximation of a
Brownian motion via simple functionals of uniform distributions on
n-spheres in the Skorokhod space $D([0,1])$. This approach is used to
simplify the proof of the self-normalized Donsker theorem in Cs\"org\H
{o} et al. (2003). Some notes on spheres with respect to $\ell_p$-norms
are given.
\end{abstract}

\begin{keywords}
\kwd{Poincar\'{e}--Borel lemma}
\kwd{Brownian motion}
\kwd{Donsker theorem}
\kwd{self-normalized sums}
\end{keywords}
\begin{keywords}[2010]
\kwd{60F05}
\kwd{60F17}
\end{keywords}


\accepted{9 August 2017}
\revised{9 August 2017}
\received{18 May 2017}
\publishedonline{18 September 2017}

\end{frontmatter}

\section{Introduction}

Let $\mathcal{S}^{n-1}(d) = \{x \in\R^{n}: \|x\|=d\}$
be the $(n-1)$-sphere with radius $d$, where $\|\cdot\|$ denotes the
Euclidean norm. The uniform measure on the unit sphere $\mathcal
{S}^{n-1} := \mathcal{S}^{n-1}(1)$ can be characterized as
\begin{equation}
\label{eq:UnifSphereMeas} \mu_{S,n} \stackrel{d} {=} \frac{(X_1,\ldots, X_n)}{\|(X_1,\ldots, X_n)\|},
\end{equation}
where $(X_1,\ldots, X_n)$ is a standard $n$-dimensional normal random variable.

The celebrated Poincar\'{e}--Borel lemma is the classical result on the
approximation of a Gaussian distribution by projections of the uniform
measure on $\mathcal{S}^{n-1}(\sqrt{n})$ as $n$ tends to infinity: Let
$n \geq m$ and $\pi_{n,m}: \R^n \rightarrow\R^m$ be the natural
projection. The uniform measure on the sphere $\mathcal{S}^{n-1}(\sqrt
{n})$ is given by $\sqrt{n}\,\mu_{S,n}$. Then, for every fixed $m \in\N$,
\[
\sqrt{n}\mu_{S,n} \circ\pi_{n,m}^{-1}
\]
converges in distribution to a standard $m$-dimensional normal
distribution as $n$ tends to infinity, cf. \cite[Proposition~6.1]{Lifshits}. Following the historical notes in \cite[Section~6]{Diaconis} on the earliest reference to this result by \'Emile Borel,
we acquire the usual practice to speak about the Poincar\'{e}--Borel lemma.

Among other fields, this convergence stimulated the development of the
infinite-dimensional functional analysis (cf. \cite{McKean}) as well as
the concentration of measure theory (cf. \cite[Section~1.1]{LedouxTalagrand}).

In particular, it inspired to consider connections of the Wiener
measure and the uniform measure on an infinite-dimensional sphere \cite{Wiener}. Such a Donsker-type result is firstly proved in \cite{Cutland_Ng} by nonstandard methods. For the illustration, we make use
of the notations in \cite{Dryden}, where this result is used for
statistical analysis of measures on high-dimensional unit spheres.
Define the functional
\[
Q_{n,2}:\mathcal{S}^{n-1} \rightarrow C\bigl([0,1]\bigr), \qquad (x_1,\ldots, x_n) \mapsto
\xch{\bigl(Q_{n,2}(t) \bigr)_{t \in[0,1]},}{\bigl(Q_{n,2}(t) \bigr)_{t \in[0,1]}}
\]
such that
\[
Q_{n,2}(k/n) := \xch{\frac{\sum_{i=1}^{k} x_i}{\|(x_1,\ldots, x_n)\|},}{\frac{\sum_{i=1}^{k} x_i}{\|(x_1,\ldots, x_n)\|}}
\]
for $k \in\{0,\ldots, n\}$ and is linearly interpolated elsewhere.
Then \cite[Theorem~2.4]{Cutland_Ng} gives that the sequence of processes
\[
\mu_{S,n} \circ Q_{n,2}^{-1}
\]
converges weakly to a Brownian motion $W := (W_t)_{t \in[0,1]}$ in the
space of continuous functions $C([0,1])$ as $n$ tends to infinity. The
first proof without nonstandard methods in $C([0,1])$ and in the
Skorohod space $D([0,1])$ is given in \cite{Rackauskas}.

In this note, we present a very simple proof of the \cadlag version of
this Poincar\'{e}--Borel lemma for Brownian motion. This is the content
of Section~\ref{section1}.

Some remarks on such Donsker-type convergence results on spheres with
respect to $\ell_p$-norms are collected in Section~\ref{section2}.

In fact, our simple approach can be used to simplify the proof of the
main result in \cite{Csorgo} as well. This is presented in Section~\ref{section3}.

\section{Poincar\'{e}--Borel lemma for Brownian motion}\label{section1}

Suppose $X_1, X_2, \ldots$ is a sequence of i.i.d. standard normal
random variables. Then $(X_1,\ldots, X_n)$ has a standard
$n$-dimensional normal distribution. We define the processes with
\cadlag paths
\[
Z^n = \biggl(Z^n_t := \frac{\sum_{i=1}^{\lfloor nt \rfloor}X_i}{\|
(X_1,\ldots, X_n)\|}
\biggr)_{t \in[0,1]}.
\]
Thus, $Z^n$ is equivalent to $\mu_{S,n} \circ\overline{Q}_{n,2}^{-1}$
for the functional
\[
\overline{Q}_{n,2}: \mathcal{S}^{n-1} \rightarrow D\bigl([0,1]\bigr),
\qquad
(x_1,\ldots, x_n) \mapsto\! \xch{\biggl(\overline{Q}_{n,2}(t) = \frac{\sum_{i=1}^{\lfloor nt \rfloor} x_i}{\|(x_1,\ldots, x_n)\|} \biggr)_{t \in[0,1]},}%
{\biggl(\overline{Q}_{n,2}(t) = \frac{\sum_{i=1}^{\lfloor nt \rfloor} x_i}{\|(x_1,\ldots, x_n)\|} \biggr)_{t \in[0,1]}}
\]
and therefore it is a relatively simple computation from the uniform
distribution on the $n$-sphere. Then the following extension of the
Poincar\'{e}--Borel lemma is true:
\begin{theorem}\label{thm:Poincare_Bm}
The sequence $(Z^n)_{n \in\N}$ converges weakly in the Skorokhod space\break
$D([0,1])$ to a standard Brownian motion $W$ as $n$ tends to infinity.
\end{theorem}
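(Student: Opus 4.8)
The plan is to realize $Z^n$ as a scalar multiple of the classical Donsker partial-sum process whose scaling factor converges to $1$, and then to run a Slutsky-type argument. First I would set
\[
S^n := \biggl( S^n_t := \frac{1}{\sqrt{n}} \sum_{i=1}^{\lfloor nt \rfloor} X_i \biggr)_{t \in [0,1]}, \qquad V_n := \frac{\sqrt{n}}{\|(X_1,\ldots,X_n)\|} = \biggl( \frac{1}{n} \sum_{i=1}^n X_i^2 \biggr)^{-1/2}.
\]
Since $\|(X_1,\ldots,X_n)\|^2 = \sum_{i=1}^n X_i^2$, one checks immediately that $Z^n_t = V_n S^n_t$ for all $t$, so that $Z^n = V_n S^n$ holds as an identity in $D([0,1])$.

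Next I would invoke the classical Donsker theorem: the $X_i$ being i.i.d. with mean $0$ and variance $1$, the process $S^n$ converges weakly in $D([0,1])$ to the standard Brownian motion $W$. The strong law of large numbers then yields $\frac{1}{n}\sum_{i=1}^n X_i^2 \to \ex[X_1^2] = 1$ almost surely, and hence $V_n \to 1$ almost surely, in particular in probability. Because the limiting scalar $1$ is deterministic, the joint convergence $(S^n, V_n) \Rightarrow (W, 1)$ in the product space $D([0,1]) \times \R$ follows from the two marginal convergences.

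Finally I would apply the continuous mapping theorem to the scalar-multiplication map $\phi : D([0,1]) \times \R \to D([0,1])$, $(x,c) \mapsto cx$. This map is continuous for the Skorokhod $J_1$ topology: choosing the same time changes $\lambda_n$ that realize a convergence $x_n \to x$, the estimate $\sup_t |c_n x_n(\lambda_n(t)) - c\, x(t)| \le |c_n|\sup_t |x_n(\lambda_n(t)) - x(t)| + |c_n - c|\sup_t |x(t)|$ tends to $0$, since every \cadlag function on $[0,1]$ is bounded. Consequently $Z^n = \phi(S^n, V_n)$ converges weakly to $\phi(W,1) = W$, which is the assertion. The only point demanding any care is the passage from convergence in probability of $V_n$ to the joint weak convergence of $(S^n, V_n)$; this is legitimate precisely because the limit of $V_n$ is a constant rather than a genuine random variable, so no independence or joint-distribution information about the pair is required.
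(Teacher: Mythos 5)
Your proposal is correct, and it proves the theorem by a genuinely different route from the paper. The paper uses your Slutsky argument only at the level of finite-dimensional distributions (law of large numbers for $\sqrt{n}/\|(X_1,\ldots,X_n)\|$, Donsker, Slutsky) and then proves tightness by hand: it splits $(Z^n_t-Z^n_s)^2$ into a diagonal part $I^{t,s}_1$ and an off-diagonal part $I^{t,s}_2$, uses the symmetry of the Gaussian vector to annihilate all mixed fourth moments, computes $\ex[I^{t,u}_1 I^{u,s}_1]$ exactly through the Beta law of $\chi^2_m/(\chi^2_m+\chi^2_k)$, and invokes the moment criterion \cite[Theorem~15.6]{Billi_alt}. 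You avoid tightness entirely by upgrading Slutsky to the functional level: the identity $Z^n=V_nS^n$, the joint convergence $(S^n,V_n)\Rightarrow(W,1)$ --- legitimate, as you note, precisely because the limit of $V_n$ is a constant, by the standard converging-together lemma on the separable metric space $D([0,1])\times\R$ --- and the continuous mapping theorem for $(x,c)\mapsto cx$, whose continuity in the Skorokhod topology you verify correctly (boundedness of \cadlag paths on $[0,1]$ handles the $|c_n-c|\sup_t|x(t)|$ term). Your proof is shorter and, for Theorem~\ref{thm:Poincare_Bm} in isolation, arguably the natural one, though note it consumes the full functional Donsker theorem where the paper's finite-dimensional step needs only the central limit theorem. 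What the paper's heavier moment computation buys is exactly what your shortcut cannot deliver: as the author remarks, the decoupling in the tightness argument is the point, since in Section~\ref{section3} the same scheme, with the symmetry argument replaced by the Cauchy--Schwarz bound (3.10) of \cite{Gine}, proves tightness for self-normalized sums of variables merely in the domain of attraction of the normal law; there the variance may be infinite, the classical Donsker theorem does not apply to $S_{\lfloor nt\rfloor}/b_n$, and hence no off-the-shelf weak limit for the numerator process is available to feed into a functional Slutsky argument.
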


\begin{proof}
As the distribution of the random vector in \eqref{eq:UnifSphereMeas}
is exactly the
uniform measure $\mu_{S,n}$, the proof of the convergence of
finite-dimensional distributions is in line with the classical Poincar\'
{e}--Borel lemma: by the law of large numbers, $\frac{1}{n}\sum_{i=1}^{n} X_i^2 \rightarrow1$ in probability. Hence, by the
continuous mapping theorem, $\sqrt{n}/(\|(X_1,\ldots, X_n)\|)
\rightarrow1$ in probability, and, by Donsker's theorem and Slutsky's
theorem, we conclude the convergence of finite-dimensional distributions.

For the tightness we consider the increments of the process $Z^n$ and
make use of a standard criterion. For all $s\leq t$ in $[0,1]$, we denote
\begin{align}
\label{eq:ThmPoincareBm0} \bigl(Z^n_t - Z^n_s
\bigr)^2 &= \frac{\sum_{ \lfloor ns \rfloor< i \leq\lfloor nt \rfloor
}X_i^2}{\sum_{i\leq n} X_i^2} + \frac{\sum_{ \lfloor ns \rfloor< i
\neq j \leq\lfloor nt \rfloor}X_iX_j}{\sum_{i\leq n} X_i^2} =:
I^{t,s}_1 + I^{t,s}_2.
\end{align}
Due to the symmetry of the standard $n$-dimensional normally
distributed vector\break $(X_1,\ldots, X_n)$, for all pairwise different
$i,j,k,l$, we observe
\begin{align}
\label{eq:ThmPoincareBm1} \ex \biggl[\frac{X_iX_jX_k X_l}{(\sum_{i\leq n} X_i^2)^2} \biggr] = \ex \biggl[
\frac{X_i^2X_jX_k}{(\sum_{i\leq n} X_i^2)^2} \biggr] =0.
\end{align}
Let $s \leq u \leq t$ in $[0,1]$. Thus via \eqref{eq:ThmPoincareBm1},
we conclude
\[
\ex \bigl[ I^{t,u}_1 I^{u,s}_2 \bigr]= 0,
\qquad \ex \bigl[ I^{t,u}_2 I^{u,s}_1\bigr] = 0,
\qquad \ex \bigl[ I^{t,u}_2 I^{u,s}_2
\bigr] = 0,
\]
and therefore
\[
\ex \bigl[ \bigl(Z^n_t - Z^n_u
\bigr)^2 \bigl(Z^n_u - Z^n_s
\bigr)^2 \bigr] = \ex \bigl[ I^{t,u}_1
I^{u,s}_1 \bigr].
\]
We denote for shorthand $m_1:= \lfloor nt \rfloor-\lfloor nu \rfloor$,
$m_2 := \lfloor nu \rfloor-\lfloor ns \rfloor$ and $m_3 := n-(\lfloor
nt \rfloor-\lfloor ns \rfloor)$. Then we observe
\[
I^{t,u}_1 I^{u,s}_1 =
\frac{\chi^2_{m_1} \chi^2_{m_2}}{ (\chi^2_{m_1} + \chi^2_{m_2} +
\chi^2_{m_3} )^2} =
\xch{\frac{\frac{1}{2} ((\chi^2_{m_1}+\chi^2_{m_2})^2 - (\chi^2_{m_1})^2 - (\chi^2_{m_2})^2 )}{ (\chi^2_{m_1} + \chi^2_{m_2} + \chi^2_{m_3} )^2},}%
{\frac{\frac{1}{2} ((\chi^2_{m_1}+\chi^2_{m_2})^2 - (\chi^2_{m_1})^2 - (\chi^2_{m_2})^2 )}{ (\chi^2_{m_1} + \chi^2_{m_2} + \chi^2_{m_3} )^2}}
\]
for pairwise independent chi-squared random variables $\chi^2_m$ with
$m$ degrees of freedom. We recall that
$\frac{\chi^2_{m} }{\chi^2_{m} + \chi^2_{k}}$ is $\text{Beta}(m/2,
k/2)$-distributed with
\begin{align}
\label{eq:ThmPoincareBm2} \ex \biggl[ \biggl(\frac{\chi^2_{m} }{\chi^2_{m} + \chi^2_{k}} \biggr)^2 \biggr]
= \biggl(\frac{m+2}{m+k+2} \biggr) \biggl(\frac{m}{m+k} \biggr).
\end{align}
Hence a computation via \eqref{eq:ThmPoincareBm2} yields
\begin{align*}
\ex \bigl[I^{t,u}_1 I^{u,s}_1 \bigr]
&= \frac{m_1 m_2}{(m_1+m_2+m_3+2)(m_1+m_2+m_3)}
\\
&\leq \biggl(\frac{m_1}{m_1+m_2+m_3} \biggr)
\xch{\biggl(\frac{m_2}{m_1+m_2+m_3} \biggr),}%
{\biggl(\frac{m_2}{m_1+m_2+m_3} \biggr)}
\end{align*}
and therefore
\begin{align*}
\ex \bigl[\bigl(Z^n_t - Z^n_u
\bigr)^2 \bigl(Z^n_u - Z^n_s
\bigr)^2 \bigr] &\leq \biggl(\frac
{\lfloor nt \rfloor-\lfloor nu \rfloor}{n} \biggr) \biggl(
\frac{\lfloor
nu \rfloor-\lfloor ns \rfloor}{n} \biggr)
\\
&\leq \biggl(\frac{\lfloor nt \rfloor-\lfloor ns \rfloor}{n} \biggr)^2.
\end{align*}

Thus the well-known criterion \cite[Theorem~15.6]{Billi_alt} (cp.
Remark~1 in \cite{P}) implies the tightness of $Z^n$.
\end{proof}

\begin{remark}

(i) The heuristic connection of the Wiener measure and the uniform measure
on an infinite-dimensional sphere goes back to Norbert Wiener's study of the
\emph{differential space}, \cite{Wiener}. The first informal presentation of
Theorem~\ref{thm:Poincare_Bm} and further historical notes can be found in
\cite{McKean}. The first rigorous proof is given in Section~2 of
\cite{Cutland_Ng}. However, the authors make use of nonstandard analysis and
the functional $Q_{n,2}$. To the best of our knowledge, the first proof of
Theorem~\ref{thm:Poincare_Bm} is \cite{Rackauskas}. In contrast, our proof is
based on the pretty decoupling in the tightness argument. Moreover, this
approach is extended in Section~\ref{section3} to a simpler proof of Theorem~1 in \cite{Csorgo}.

(ii) According to the historical comments in \cite[Section~2.2]{Vershik}, the Poincar\'{e}--Borel lemma could be also attributed
to Maxwell and Mehler.
\end{remark}

\section{$\ell^n_p$-spheres}\label{section2}

In this section, we consider uniform measures on $\ell^n_p$-spheres and prove
that the limit in Theorem~\ref{thm:Poincare_Bm} is the only case
such that a simple $\overline{Q}$-type pathwise functional leads to a
nontrivial limit (Theorem~\ref{thm:PoincareSpheres}).

Furthermore, we present random variables living on $\ell^n_p$-spheres, with a
similar characterization for a fractional Brownian motion
(Theorem~\ref{thm:DonskerSelfNormalizedFbm}).

Concerning the $\ell^n_p$ norm $\|x\|_p =  (\sum_{i=1}^{n}|x_i|^p )^{1/p}$
for $p \in[1,\infty)$ and defining the $\ell^n_p$ unit sphere
\[
\mathcal{S}^{n-1}_p := \bigl\{x \in\R^n : \|x
\|_p=1\bigr\},
\]
the uniform measure $\mu_{S,n,p}$ on $\mathcal{S}^{n-1}_p$ is
characterized similarly to the uniform measure on the Euclidean unit
sphere by independent results in \cite[Lemma~1]{Schechtman} and \cite
[Lemma~3.1]{Rachev}:
\begin{proposition}\label{prop:uniformEllPSphere}
Suppose $X,X_1, X_2,\ldots$ is a sequence of i.i.d. random variables
with density
\[
f(x) = \frac{\exp(-|x|^p/p)}{2p^{1/p}\varGamma(1+1/p)}.
\]
Then
\[
\mu_{S,n,p} \stackrel{d} {=} 
\frac{(X_1,\ldots, X_n)}{\|(X_1,\ldots, X_n)\|_p}.
\]
\end{proposition}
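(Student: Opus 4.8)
The plan is to exploit the fact that, with this particular density $f$, the joint density of $(X_1,\ldots,X_n)$ depends on its argument only through the $\ell^n_p$-norm, so that after a polar-type change of variables adapted to $\|\cdot\|_p$ the radial and angular parts decouple. First I would record that $f$ is a probability density: the substitution $u=|x|^p/p$ gives $\int_\R \exp(-|x|^p/p)\,dx = 2p^{1/p}\varGamma(1+1/p)$, which is exactly the normalizing constant in the denominator. By independence, the joint density on $\R^n$ is then
\[
g(x) = \prod_{i=1}^n f(x_i) = \frac{1}{(2p^{1/p}\varGamma(1+1/p))^n}\exp\!\left(-\frac{1}{p}\sum_{i=1}^n |x_i|^p\right) = C_n\,\exp\!\left(-\frac{\|x\|_p^p}{p}\right),
\]
the crucial point being that $g(x)$ is a function of $\|x\|_p$ alone.

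Next I would introduce the polar decomposition with respect to $\|\cdot\|_p$: every $x\neq 0$ is written uniquely as $x=r\theta$ with $r=\|x\|_p>0$ and $\theta\in\mathcal{S}^{n-1}_p$, and I would invoke the standard fact that Lebesgue measure factorizes as $dx=\kappa\,r^{n-1}\,dr\,d\mu_{S,n,p}(\theta)$, where $\mu_{S,n,p}$ is precisely the \emph{cone measure} on $\mathcal{S}^{n-1}_p$ (the uniform measure obtained by radially projecting the normalized Lebesgue measure on the unit ball) and $\kappa$ is a constant. Substituting this together with the formula for $g$, the law of $(R,\varTheta):=(\|X\|_p,\, X/\|X\|_p)$ has density proportional to $r^{n-1}\exp(-r^p/p)\,dr\otimes d\mu_{S,n,p}(\theta)$.

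Since this is a product, $R$ and $\varTheta$ are independent and the marginal law of $\varTheta = X/\|X\|_p$ is proportional to $\mu_{S,n,p}$; as both are probability measures they coincide, which is the claim. Alternatively, one may simply quote \cite{Schechtman} and \cite{Rachev} for the factorization, but the self-contained change-of-variables argument above is already essentially complete once the decoupling is observed.

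The main obstacle is making the polar factorization precise and, in particular, pinning down which measure on the sphere the \emph{uniform measure} refers to. For $p\neq 2$ the cone measure and the surface (Hausdorff) measure on $\mathcal{S}^{n-1}_p$ genuinely differ, and the representation as $X/\|X\|_p$ holds for the cone measure; so the argument hinges on taking $\mu_{S,n,p}$ to be the cone measure, consistent with the cited references. The factorization itself is justified by a Fubini/change-of-variables computation, and once it is in hand the decoupling and normalization are immediate.
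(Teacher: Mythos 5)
Your proposal is correct, but note that the paper does not actually prove Proposition~\ref{prop:uniformEllPSphere} at all: it states the representation as a known characterization and refers to \cite[Lemma~1]{Schechtman} and \cite[Lemma~3.1]{Rachev} for it. What you have written is, in effect, a self-contained reconstruction of the standard argument behind those cited lemmas: the joint density $C_n\exp(-\|x\|_p^p/p)$ depends on $x$ only through $\|x\|_p$, Lebesgue measure factorizes under the polar map $x=r\theta$, $r=\|x\|_p$, as $\kappa\, r^{n-1}\,dr\otimes d\mu_{S,n,p}(\theta)$ with $\mu_{S,n,p}$ the cone measure (this follows from the homogeneity of Lebesgue measure applied to sets of the form $\{r\theta: r\in(a,b],\ \theta\in A\}$, whose volume is $(b^n-a^n)\,\mathrm{vol}(B^n_p)\,\mu_{S,n,p}(A)$), and the resulting product structure gives independence of $\|X\|_p$ and $X/\|X\|_p$ together with the identification of the angular marginal. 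Your normalization check of $f$ via $u=|x|^p/p$, yielding $2p^{1/p}\varGamma(1+1/p)$, is also correct. Most importantly, you correctly isolate the one genuine subtlety: the representation identifies the cone measure, not the surface measure, and these differ for $p\notin\{1,2,\infty\}$ --- a point the paper itself only mentions separately, in Remark~\ref{remark:PoincareEllp}(i) with reference to \cite{Naor}, without tying it to the proof. So relative to the paper your route buys a complete, citation-free argument that makes the definition of the \emph{uniform measure} $\mu_{S,n,p}$ precise, at the cost of having to justify the polar factorization, which you rightly reduce to a routine Fubini/homogeneity computation; the paper's citation route is shorter but leaves that definitional point implicit.
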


\begin{remark}\label{remark:PoincareEllp}

(i) We notice that the uniform measure on the $\ell^n_p$-sphere
equals the surface measure only in the cases $p \in\{1,2,\infty\}$,
see e.g. \cite[Section~3]{Rachev} or the interesting study of the total
variation distance of these measures for $p \geq1$ in \cite{Naor}.

(ii) In particular, we have a counterpart of the classical Poincar\'
{e}--Borel lemma for finite-dimensional distributions: For every fixed
$m \in\N$,
\[
n^{1/p}\mu_{S,n,p} \circ\pi_{n,m}^{-1}
\]
converges in distribution to the random vector $(X_1,\ldots, X_m)$ as $n$
tends to infinity. This follows immediately from $\ex[|X|^p]=1$ and the law
of large numbers, cf. \cite[Proposition~6.1]{Lifshits} or the
finite-dimensional convergence in Theorem~\ref{thm:Poincare_Bm}.
\end{remark}

Similarly to the characterization of the central limit theorem, cp.
\cite[Theo-\break{}rem~4.23]{Kallenberg}, but in contrast to the convergence of the
projection on a finite number of coordinates in Remark~\ref
{remark:PoincareEllp}, we have a uniqueness result for the processes
constructed according to the $\overline{Q}$-type pathwise functionals.

In the following we denote the convergence in distribution by $\stackrel
{d}{\rightarrow}$ and the almost sure convergence by $\stackrel
{a.s.}{\rightarrow}$.

\begin{theorem}\label{thm:PoincareSpheres}
Suppose $p \geq1$ and denote
\[
\overline{Q}_{n,p}: (x_1,\ldots, x_n) \mapsto
\biggl(\frac{\sum_{i=1}^{\lfloor nt \rfloor} x_i}{\|(x_1,\ldots, x_n)\|_p} \biggr)_{t \in[0,1]}.
\]
Then, in the Skorokhod space $D([0,1])$, as $n$ tends to infinity:
\[
\mu_{S,n,p} \circ\overline{Q}_{n,p}^{-1}
\begin{cases} \stackrel{a.s.}{\rightarrow} 0, &p <2,\\
\stackrel{d}{\rightarrow} W, &p=2,\\
\textnormal{is divergent}, &p>2.
\end{cases} %
\]
\end{theorem}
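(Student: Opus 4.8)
The plan is to realize the random process explicitly via Proposition~\ref{prop:uniformEllPSphere} and then dispatch the three regimes with elementary limit theorems for i.i.d.\ sums. By that proposition, $\mu_{S,n,p}\circ\overline{Q}_{n,p}^{-1}$ is the law of the \cadlag process
\[
Z^n = \biggl(Z^n_t := \frac{\sum_{i=1}^{\lfloor nt\rfloor} X_i}{\|(X_1,\ldots,X_n)\|_p}\biggr)_{t\in[0,1]},
\]
where $X_1,X_2,\ldots$ are i.i.d.\ with the density $f$ of Proposition~\ref{prop:uniformEllPSphere}. Two preliminary facts drive everything. First, $f$ is symmetric, so $\ex[X_1]=0$, and since $f$ decays like $\exp(-|x|^p/p)$ all moments of $X_1$ are finite; in particular $\sigma^2:=\ex[X_1^2]\in(0,\infty)$. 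Second, $\ex[|X_1|^p]=1$ (as used in Remark~\ref{remark:PoincareEllp}(ii)), so the strong law of large numbers gives $\tfrac1n\sum_{i=1}^n|X_i|^p\to 1$ a.s., whence the denominator satisfies $\|(X_1,\ldots,X_n)\|_p = n^{1/p}(1+o(1))$ a.s.

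The case $p=2$ is immediate: there $f$ reduces to the standard normal density, so $Z^n$ is exactly the process of Theorem~\ref{thm:Poincare_Bm} and converges weakly to $W$. For $p<2$ I would prove the stronger statement $\sup_{t\in[0,1]}|Z^n_t|\to 0$ a.s.; since the limit $0$ is continuous, this is precisely convergence to $0$ in the Skorokhod topology. Writing $S_k:=\sum_{i=1}^k X_i$, one has
\[
\sup_{t\in[0,1]}\bigl|Z^n_t\bigr| = \frac{\max_{0\leq k\leq n}|S_k|}{\|(X_1,\ldots,X_n)\|_p}.
\]
The law of the iterated logarithm, in its maximal form, yields $\max_{0\leq k\leq n}|S_k| = O(\sqrt{n\log\log n})$ a.s., while the denominator is of exact order $n^{1/p}$. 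Since $1/p>1/2$ for $p<2$, the factor $n^{1/2-1/p}$ beats $(\log\log n)^{1/2}$, so the ratio tends to $0$ a.s.

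For $p>2$ I would show that already the one-dimensional marginal at $t=1$ fails to be tight, which precludes any weak limit in $D([0,1])$. Indeed,
\[
Z^n_1 = \frac{S_n}{\|(X_1,\ldots,X_n)\|_p} = n^{1/2-1/p}\,\frac{S_n/\sqrt{n}}{1+o(1)},
\]
where $S_n/\sqrt{n}\stackrel{d}{\rightarrow}N(0,\sigma^2)$ by the central limit theorem and $n^{1/2-1/p}\to\infty$. Hence for every fixed $M>0$ the probability $\mathbb{P}(|Z^n_1|\leq M)$ equals $\mathbb{P}(|S_n/\sqrt{n}|\leq M\,n^{1/p-1/2}(1+o(1)))$, and since $M\,n^{1/p-1/2}\to 0$ while the Gaussian limit has no atom at $0$, this probability tends to $0$. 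Thus $|Z^n_1|\to\infty$ in probability, the family $\{Z^n_1\}_n$ is not tight, and $\mu_{S,n,p}\circ\overline{Q}_{n,p}^{-1}$ is divergent.

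The routine inputs (symmetry, finiteness of all moments, the normalization $\ex[|X_1|^p]=1$, and the strong law for the denominator) are straightforward. The step demanding the most care is the uniform a.s.\ control in the regime $p<2$: pointwise convergence $Z^n_t\to 0$ for fixed $t$ is a one-line law-of-large-numbers argument, but upgrading it to $\sup_{t}|Z^n_t|\to 0$ requires the maximal law-of-the-iterated-logarithm bound on $\max_{k\leq n}|S_k|$ in order to rule out large fluctuations of the running maximum. This is the only point where a genuinely pathwise estimate, rather than a one-dimensional limit theorem, is needed.
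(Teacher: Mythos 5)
Your proof is correct, and its skeleton matches the paper's: both realize $\mu_{S,n,p}\circ\overline{Q}_{n,p}^{-1}$ through Proposition~\ref{prop:uniformEllPSphere} and use the strong law $\|(X_1,\ldots,X_n)\|_p/n^{1/p}\rightarrow 1$ a.s.\ to reduce everything to the behaviour of $n^{-1/p}S_{\lfloor n\cdot\rfloor}$. The difference lies in how the three regimes are dispatched. The paper treats them in one stroke via the factorization $n^{-1/p}=n^{-1/2}\,n^{(p-2)/(2p)}$ together with Donsker's theorem and Slutsky's theorem: the deterministic factor $n^{(p-2)/(2p)}$ vanishes for $p<2$ and explodes for $p>2$, with the a.s.\ claim for $p<2$ resting on the Marcinkiewicz--Zygmund strong law (the paper's citation of \cite[Theorem~4.23]{Kallenberg}). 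You instead argue case by case: Theorem~\ref{thm:Poincare_Bm} for $p=2$ (legitimate, since $f$ is exactly the standard normal density when $p=2$), a maximal law-of-the-iterated-logarithm bound for $p<2$, and non-tightness of the single marginal $Z^n_1$ for $p>2$ --- which does preclude any weak limit, since evaluation at the right endpoint is continuous on $D([0,1])$ (Skorokhod time changes fix $t=1$), so weak convergence of the processes would force tightness of $Z^n_1$. Two comments on what each route buys. Your LIL step is stronger than strictly needed: the Marcinkiewicz--Zygmund law $n^{-1/p}S_n\rightarrow 0$ a.s.\ already suffices for $p<2$, once upgraded to $\max_{k\le n}|S_k|/n^{1/p}\rightarrow 0$ by the elementary bound $\max_{k\le n}|S_k|\le \max_{k\le m}|S_k|+n^{1/p}\sup_{k>m}\bigl(|S_k|/k^{1/p}\bigr)$, and it requires only $\ex[|X|^p]<\infty$ rather than finite variance; but the LIL is available here (all moments of $f$ are finite) and has the merit of making the uniform a.s.\ control explicit --- the paper states only the pointwise a.s.\ limit at fixed $t$, whereas a.s.\ convergence in $D([0,1])$ to the continuous limit $0$ means uniform convergence, so your treatment closes a small gap the paper glosses over. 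Conversely, for $p>2$ your one-marginal argument makes precise what ``is divergent'' means, where the paper leaves the non-tightness implicit in the Donsker--Slutsky phrasing.
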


\begin{proof}
The strong law of large numbers \cite[Theorem~4.23]{Kallenberg} implies that\break
$n^{1/p}/\|(X_1,\ldots, X_n)\|_p \rightarrow1$ almost surely for all
$p\geq1$. Moreover, for $p<2$, it gives as well that $\frac {1}{n^{1/p}}
\sum_{i=1}^{\lfloor nt \rfloor} X_i \rightarrow0$ almost surely for all $t
\in[0,1]$. Thanks to Proposition~\ref {prop:uniformEllPSphere}, we have
\[
\mu_{S,n,p} \circ\overline{Q}_{n,p}^{-1} \stackrel{d}
{=} \frac
{n^{1/p}}{\|(X_1,\ldots, X_n)\|_p} \Biggl(n^{-1/p} \sum_{i=1}^{\lfloor n
\cdot\rfloor}
X_i \Biggr).
\]
Thus we conclude via $n^{-1/p}= n^{-1/2} n^{(p-2)/2p}$, Donsker's
theorem and Slutsky's theorem.
\end{proof}

However, the $\ell^n_p$ spheres can be involved in another convergence
result. The fractional Brownian motion $B^H=(B^H_t)_{t \geq0}$ with
Hurst parameter $H \in(0,1)$ is a centered Gaussian process with the
covariance $\ex[B^H_t B^H_s] = \frac{1}{2}(t^{2H}+s^{2H}-|t-s|^{2H})$.
We refer to \cite{Mishura} for further information on this
generalization of the Brownian motion beyond semimartingales. In
particular, there is the following random walk approximation (\cite[Theorem~2.1]{Taqqu} or \cite[Lemma~1.15.9]{Mishura}): Let $\{X_i\}_{i
\geq1}$ be a stationary Gaussian sequence with $\ex[X_i]=0$ and correlations
\[
\sum_{i,j =1}^{n} \ex[X_i
X_j] \sim n^{2H} L(n),
\]
as $n$ tends to infinity for a slowly varying function $L$. Then $\frac
{1}{n^{2H} L(n)}\sum_{i=1}^{\lfloor nt \rfloor}X_i$ converges weakly in
the Skorohod space $D([0,1])$ towards a fractional Brownian motion with
Hurst parameter $H$. For simplification let $X_i = B^H_i - B^H_{i-1}$,
$i \in\N$, be the correlated increments of the fractional Brownian
motion $B^H$. The stationarity and the ergodic theorem imply, for $p>0$
and the constant $c_H:= \ex[|B^H_1|^{1/H}]$, that
\begin{align}
\label{eq:ErgodicThmFBm}
\bigl(\bigl\|(X_1,\ldots, X_n)\bigr\|_p/n^{H} \bigr)^p= n^{-Hp} \sum
_{i=1}^{n} |X_i|^p
\stackrel{a.s.} {\rightarrow} %
\begin{cases} 0, & p>1/H,\\ c_H &p=1/H,\\ +\infty, & p<1/H,
\end{cases} %
\end{align}
(see e.g. \cite[Eq. (1.18.3)]{Mishura}). With this at hand, we obtain a
similar uniqueness result:

\begin{theorem}\label{thm:DonskerSelfNormalizedFbm}
Let $X_i = B^H_i - B^H_{i-1}$, $i \in\N$, be the increments of a
fractional Brownian motion $B^H$. Then, in the Skorokhod space
$D([0,1])$, as $n$ tends to infinity:
\[
\overline{Q}_{n,p}(X_1,\ldots, X_n) = \biggl(
\frac{\sum_{i=1}^{\lfloor
nt \rfloor} X_i}{\|(X_1,\ldots, X_n)\|_p} \biggr)_{t \in[0,1]} %
\begin{cases} \stackrel{a.s.}{\rightarrow} 0, &p <1/H,\\
\stackrel{d}{\rightarrow} B^H/c_H^H, &p=1/H,\\
\textnormal{is divergent}, &p>1/H.
\end{cases} %
\]
\end{theorem}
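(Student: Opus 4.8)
The plan is to mimic the proof of Theorem~\ref{thm:PoincareSpheres}, factoring out the normalizing constant and reducing the three cases to the a.s.\ behaviour of $\|(X_1,\ldots,X_n)\|_p/n^H$ recorded in \eqref{eq:ErgodicThmFBm}. Since $\sum_{i=1}^{\lfloor nt\rfloor}X_i = B^H_{\lfloor nt\rfloor}$, I would write
\[
\overline{Q}_{n,p}(X_1,\ldots,X_n)_t = \frac{n^H}{\|(X_1,\ldots,X_n)\|_p}\cdot\frac{B^H_{\lfloor nt\rfloor}}{n^H}.
\]
First I would treat the process factor: as $X_i=B^H_i-B^H_{i-1}$ is stationary Gaussian with $\sum_{i,j=1}^{n}\ex[X_iX_j]=\ex[(B^H_n)^2]=n^{2H}$ (so the slowly varying function is $L\equiv1$), the cited random walk approximation (\cite[Theorem~2.1]{Taqqu}, \cite[Lemma~1.15.9]{Mishura}) gives $n^{-H}B^H_{\lfloor n\cdot\rfloor}\stackrel{d}{\rightarrow}B^H$ in $D([0,1])$; equivalently this is the self-similarity of $B^H$. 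For the scalar factor, \eqref{eq:ErgodicThmFBm} yields $\|(X_1,\ldots,X_n)\|_p/n^H=(n^{-Hp}\sum_{i=1}^n|X_i|^p)^{1/p}\stackrel{a.s.}{\rightarrow}+\infty,\ c_H^{H},\ 0$ for $p<1/H$, $p=1/H$, $p>1/H$ respectively (at $p=1/H$ one has $1/p=H$, so the middle value $c_H$ inside \eqref{eq:ErgodicThmFBm} becomes $c_H^{H}$ after the $p$-th root), and hence $n^H/\|(X_1,\ldots,X_n)\|_p\stackrel{a.s.}{\rightarrow}0,\ c_H^{-H},\ +\infty$.

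The case $p=1/H$ is then immediate. Here $n^H/\|(X_1,\ldots,X_n)\|_p\stackrel{a.s.}{\rightarrow}c_H^{-H}$, a constant, so $(n^H/\|(X_1,\ldots,X_n)\|_p,\,n^{-H}B^H_{\lfloor n\cdot\rfloor})$ converges jointly in distribution to $(c_H^{-H},B^H)$; applying the continuous scalar multiplication $D([0,1])\to D([0,1])$, i.e.\ Slutsky's theorem, gives $\overline{Q}_{n,1/H}(X_1,\ldots,X_n)\stackrel{d}{\rightarrow}c_H^{-H}B^H=B^H/c_H^H$. For $p>1/H$ the scalar factor blows up while the process factor stays nondegenerate, and I would exhibit divergence at a single time, say $t=1$: $|\overline{Q}_{n,p}(X_1,\ldots,X_n)_1|=(n^H/\|(X_1,\ldots,X_n)\|_p)\,|B^H_n|/n^H$, where $n^H/\|(X_1,\ldots,X_n)\|_p\stackrel{a.s.}{\rightarrow}+\infty$ and $|B^H_n|/n^H\stackrel{d}{=}|B^H_1|$ is nondegenerate. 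A short tightness estimate (the event $\{A_nY_n\le M\}$ is contained in $\{A_n\le M/\eta\}\cup\{Y_n<\eta\}$) then shows $|\overline{Q}_{n,p}(X_1,\ldots,X_n)_1|\rightarrow+\infty$ in probability, so the sequence is not tight and cannot converge in distribution.

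The only genuinely delicate case is $p<1/H$, where the claim is a.s.\ convergence to the zero function; since the limit is continuous, this is equivalent to $\sup_{t\in[0,1]}|\overline{Q}_{n,p}(X_1,\ldots,X_n)_t|\stackrel{a.s.}{\rightarrow}0$, that is, to uniform rather than merely pointwise control. Bounding $\sup_t|\overline{Q}_{n,p}(X_1,\ldots,X_n)_t|=\max_{0\le k\le n}|B^H_k|/\|(X_1,\ldots,X_n)\|_p\le(\sup_{s\in[0,n]}|B^H_s|/n^H)/(\|(X_1,\ldots,X_n)\|_p/n^H)$, the denominator grows like $n^{1/p-H}$ with $1/p-H>0$, by the ergodic theorem applied to $n^{-1}\sum_{i=1}^n|X_i|^p\to\ex|X_1|^p$ underlying \eqref{eq:ErgodicThmFBm}. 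It therefore suffices to show the numerator $\sup_{s\in[0,n]}|B^H_s|/n^H$ is sub-polynomial a.s. This is the main obstacle, since it uses more than the distributional convergence already established, and I would resolve it either by the law of the iterated logarithm for $B^H$ (which gives an $O(\sqrt{\log\log n})$ bound) or, more elementarily, by a Borel--Cantelli argument along $n=2^k$: using self-similarity $\sup_{s\in[0,2^k]}|B^H_s|\stackrel{d}{=}2^{kH}\sup_{s\in[0,1]}|B^H_s|$ together with the Gaussian concentration of $\sup_{s\in[0,1]}|B^H_s|$ (Borell--TIS), the events $\{\sup_{s\le 2^k}|B^H_s|>2^{k(H+\delta)}\}$ are summable for every $\delta>0$, yielding $\sup_{s\le n}|B^H_s|=O(n^{H+\delta})$ a.s. Choosing $\delta<1/p-H$ makes the ratio tend to $0$ almost surely, which completes this case and the proof.
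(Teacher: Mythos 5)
Your proposal is correct and is essentially the paper's own argument: the same factorization $\overline{Q}_{n,p}(X_1,\ldots,X_n)=\frac{n^H}{\|(X_1,\ldots,X_n)\|_p}\,\bigl(n^{-H}B^H_{\lfloor n\cdot\rfloor}\bigr)$, with Taqqu's theorem (here reducible, as you note, to self-similarity plus continuity of $B^H$) for the process factor, \eqref{eq:ErgodicThmFBm} for the scalar factor, and Slutsky's theorem --- precisely what the paper's two-line proof (``conclude as in Theorem~\ref{thm:PoincareSpheres}'') intends. Your extra detail in the boundary cases --- the dyadic Borel--Cantelli/Borell--TIS bound $\sup_{s\le n}|B^H_s|=O(n^{H+\delta})$ a.s.\ for $p<1/H$ (where the lighter Gaussian-tail estimate $k^{-1/p}B^H_k\to 0$ a.s.\ at integer times, combined with the maximum trick, would already suffice, since $\overline{Q}_{n,p}$ only evaluates $B^H$ at integers) and the non-tightness of the $t=1$ marginal for $p>1/H$ --- correctly identifies and fills the one step the paper leaves implicit, namely that the a.s.\ claim needs almost-sure, not merely distributional, control of the numerator.
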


\begin{proof}
Taqqu's limit theorem implies, for all $H \in(0,1)$,
\[
\Biggl(n^{-H}\sum_{i=1}^{\lfloor nt \rfloor}
X_i \Biggr)_{t \in[0,1]} \stackrel{d} {\rightarrow}
B^H
\]
in the Skorokhod space $D([0,1])$. Then, thanks to \eqref {eq:ErgodicThmFBm},
we conclude as in Theorem~\ref{thm:PoincareSpheres}.
\end{proof}

\begin{remark}
Due to the different correlations between the random variables $X_i$ in
Theorem~\ref{thm:DonskerSelfNormalizedFbm}, there is no symmetric and trivial
sequence of measures $\hat{\mu}_{S,n,p}$ on the $\ell ^n_p$-spheres and some
simple $\overline{Q}_{n,p}$-type pathwise functionals, which represent the
distributions of $\overline {Q}_{n,p}(X_1,\ldots, X_n)$. However, it would be
interesting, whether some uniform or surface measures on geometric objects in
combination with simple $\overline{Q}_{p}$-type pathwise functionals allow
similar Donsker-type theorems for fractional Brownian motion or other
Gaussian processes?
\end{remark}

\section{The self-normalized Donsker theorem}\label{section3}

Suppose $X,X_1,X_2,\ldots$ is a sequence of i.i.d. nondegenerate random
variables and we denote for all $n \in\N$,
\[
S_n := \sum_{i=1}^{n}X_i,
\qquad V_n^2 := \sum_{i=1}^n
X_i^2.
\]
Limit theorems for self-normalized sums $S_n/V_n$ play an important
role in statistics, see e.g. \cite{Gine}, and have been extensively
studied during the last decades, cf. the monograph on self-normalizes
processes \cite{delaPena}.

In \cite{Csorgo}, the following invariance principle for
self-normalized sum processes is established.

\begin{theorem}[Theorem~1 in \cite{Csorgo}]\label{thm:Csorgo}
Assume the notations above and denote
\[
Z^n_t := S_{\lfloor nt \rfloor}/V_n.
\]
Then the following assertions, with $n$ tending to infinity, are equivalent:
\begin{enumerate}
\item[(a)] $E[X]=0$ and $X$ is in the domain of attraction of the
normal law (i.e. there exists a sequence $(b_n)_{n \geq1}$ with
$S_n/b_n \stackrel{d}{\rightarrow} \mathcal{N}(0,1)$).
\item[(b)] For all $t_0 \in(0,1]$, $Z^n_{t_0} \stackrel{d}{\rightarrow
} \mathcal{N}(0,t_0)$.
\item[(c)] $(Z^n_t)_{t \in[0,1]}$ converges weakly to $(W_t)_{t \in
[0,1]}$ on $(D([0,1]), \rho)$, where $\rho$ denotes the uniform topology.
\item[(d)] On an appropriate joint probability space, the following is valid:
\[
\sup_{t \in[0,1]} \bigl|Z^n_{t} - W(nt)/\sqrt{n}\bigr| =
o_P(1).
\]
\end{enumerate}
\end{theorem}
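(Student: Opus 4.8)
The plan is to prove the equivalence as a cycle, routing the three ``analytic'' conditions through $(a)\Rightarrow(c)\Rightarrow(b)\Rightarrow(a)$ and then attaching $(d)$ by the implications $(a)\Rightarrow(d)$ and $(d)\Rightarrow(c)$. Two of these steps are essentially free. For $(c)\Rightarrow(b)$, in the uniform topology on $D([0,1])$ the evaluation map $f\mapsto f(t_0)$ is continuous, so weak convergence of $(Z^n_t)_t$ to $W$ immediately yields $Z^n_{t_0}\stackrel{d}{\rightarrow}W_{t_0}\sim\mathcal{N}(0,t_0)$ by the continuous mapping theorem. For $(d)\Rightarrow(c)$, the process $(W(nt)/\sqrt{n})_{t\in[0,1]}$ is again a standard Brownian motion by Brownian scaling, so the uniform bound $\sup_{t}|Z^n_t-W(nt)/\sqrt{n}|=o_P(1)$ together with a Slutsky argument in the uniform metric gives $Z^n\Rightarrow W$.

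For $(b)\Rightarrow(a)$ I would specialize to $t_0=1$, where $Z^n_1=S_n/V_n$, so that $(b)$ reduces to the self-normalized central limit theorem $S_n/V_n\stackrel{d}{\rightarrow}\mathcal{N}(0,1)$. The characterization of this convergence (Gin\'e--G\"otze--Mason; cf.\ \cite{Gine,delaPena}) states precisely that it holds if and only if $\ex[X]=0$ and $X$ lies in the domain of attraction of the normal law, which is $(a)$. This is the one step I would import as an external input rather than reprove.

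The heart of the argument, and where the decoupling of Theorem~\ref{thm:Poincare_Bm} is meant to simplify matters, is $(a)\Rightarrow(c)$, which I would split into finite-dimensional convergence and tightness. For the finite-dimensional distributions, write an increment as $Z^n_t-Z^n_s=(S_{\lfloor nt\rfloor}-S_{\lfloor ns\rfloor})/V_n$; the numerators over disjoint blocks are independent, and under $(a)$ the blockwise self-normalized CLT, combined with the self-normalized law of large numbers $V_{\lfloor nt\rfloor}^2/V_n^2\to t$, yields joint convergence of the increments to independent $\mathcal{N}(0,t-s)$ variables, i.e.\ to the increments of $W$. For tightness I would mimic the decomposition \eqref{eq:ThmPoincareBm0}. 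The key difference from the Gaussian case is that the vanishing of cross moments \eqref{eq:ThmPoincareBm1} there relied on the symmetry of the normal vector, which a general law in the domain of attraction need not have. The decoupling can nevertheless be recovered: bound the shared normalizer from below by the sum of squares $V_0^2$ over the complementary block of indices, which is \emph{independent} of the two increment-sums $A,B$ of lengths $m_1,m_2$; then $\ex[X]=0$ alone forces the cross terms to vanish and the expectation factorizes as $\ex[A^2]\,\ex[B^2]\,\ex[1/V_0^4]$, producing an estimate of order $(m_1/n)(m_2/n)$ and allowing the criterion \cite[Theorem~15.6]{Billi_alt} (cp.\ Remark~1 in \cite{P}) to be applied exactly as in Theorem~\ref{thm:Poincare_Bm}.

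The main obstacle is that this clean factorization presupposes a finite second moment, $\ex[A^2]=m_1\ex[X^2]$, whereas a law in the domain of attraction of the normal law may have infinite variance, and dropping the blocks' own squares from the denominator destroys the compensation that keeps the self-normalized increments bounded. I expect to resolve this by a truncation at a level matched to $V_n$: the truncated second moment $\ex[X^2\mathbf{1}_{|X|\le\tau}]$ is slowly varying (this is the analytic content of the domain-of-attraction hypothesis), while the oversized parts are asymptotically negligible because $\max_{i\le n}X_i^2/V_n^2\to 0$ in probability under $(a)$. This truncation step, absent in the Gaussian Theorem~\ref{thm:Poincare_Bm}, is where the real work lies. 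Finally, for $(a)\Rightarrow(d)$ I would invoke a Skorokhod-type embedding and strong approximation of the self-normalized partial sums by Brownian motion, as in \cite{Csorgo}; this construction falls outside the scope of the decoupling method and I would take it over from the existing literature.
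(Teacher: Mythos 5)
Your decoupling for tightness---bounding the self-normalizer from below by the sum of squares $V_0^2$ over the complementary block so that the denominator becomes independent of the two increments---is exactly the step that fails, and for more reasons than the infinite-variance issue you flag. First, the factorized bound $\ex[A^2]\,\ex[B^2]\,\ex[V_0^{-4}]$ requires $\ex[V_0^{-4}]<\infty$, which nondegeneracy plus domain of attraction do not provide: any admissible law with $P(X=0)>0$ (say $X=\pm1$ with probability $1/4$ each and $X=0$ otherwise) gives $P(V_0=0)>0$, so your bound is literally infinite; negative moments of $V_0$ need an anti-concentration input that the hypotheses do not supply. Second, the complementary block has length $m_3=n-(\lfloor nt\rfloor-\lfloor ns\rfloor)$, which can be $0$ (e.g. $s=0$, $t=1$), while the criterion of \cite[Theorem~15.6]{Billi_alt} needs the estimate for all $s\leq u\leq t$. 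Third, your proposed repair---truncation at a level ``matched to $V_n$''---makes the truncation event depend on all coordinates, which destroys the very independence on which the factorization rests; the fix undoes itself, and you acknowledge this part is unresolved, so the heart of the argument is missing. The paper's proof never removes the full normalizer: with $V_n^4$ kept in the denominator every summand of the fourth moment is bounded, exchangeability alone gives the key estimate $\ex[X_1^2X_2^2/(\sum_{i\leq n}X_i^2)^2]\leq 1/(n(n-1))$ (take $m=n$ in \eqref{eq:ThmCsorgo1}: the left side is then $1$ and both right-hand terms are nonnegative---no moment assumption whatsoever), and the asymmetric cross terms (the analogues of \eqref{eq:ThmPoincareBm1}) are controlled by the Cauchy--Schwarz inequality together with the moment bound (3.10) of \cite{Gine}, which is the only place hypothesis $(a)$ enters the tightness proof; no truncation occurs anywhere.

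Your routing of $(d)$ also inverts the paper's economy. You import $(a)\Rightarrow(d)$ from \cite{Csorgo}, but that strong-approximation construction is precisely what the paper is written to bypass, so with that citation your argument never proves the theorem independently of the original proof---while your one genuinely new contribution, the tightness step, is the part that breaks. The paper instead proves $(a)\Rightarrow(c_0)$, weak convergence on the Skorokhod space $D([0,1])$, notes $(c)\Leftrightarrow(c_0)$ because the limit $W$ has continuous paths, and then obtains $(d)$ essentially for free: the Skorokhod--Dudley representation theorem \cite[Theorem~4.30]{Kallenberg} realizes the convergence almost surely on an appropriate space, convergence in the Skorokhod metric to a continuous limit is uniform convergence, and $(W(nt)/\sqrt{n})_{t\in[0,1]}$ is again a Brownian motion---no embedding or strong approximation is needed at any point. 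Your remaining steps are sound and match the paper: $(c)\Rightarrow(b)$ and $(d)\Rightarrow(c)$ are the easy implications, $(b)\Leftrightarrow(a)$ is correctly outsourced to \cite{Gine}, and your finite-dimensional argument for $(a)\Rightarrow(c)$ is in the same spirit as the paper's, which phrases it through $V_n/b_n\to1$ and the regular variation of $b_n$ rather than $V_{\lfloor nt\rfloor}^2/V_n^2\to t$---a cosmetic difference only.
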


\begin{remark}
The equivalence of $(a)$ and $(b)$ is the celebrated result \cite
[Theorem~3]{Gine}. Since the implications $(d) \Rightarrow(c)
\Rightarrow(b)$ are trivial, the proof in \cite{Csorgo} is completed
by showing $(a) \Rightarrow(d)$.

Thanks to a tightness argument as in the proof of Theorem~\ref
{thm:Poincare_Bm}, we obtain a simpler alternative for the proof.
\end{remark}

\begin{proof}[Proof of Theorem~\ref{thm:Csorgo}]
As stated in the remark, we already know that $(d) \Rightarrow(c)
\Rightarrow(b) \Leftrightarrow(a)$. We denote
\begin{enumerate}
\item[($c_0$)] $(Z^n_t)_{t \in[0,1]}$ converges weakly to $(W_t)_{t
\in[0,1]}$ on the Skorokhod space $D([0,1])$.
\end{enumerate}
By the continuity of the paths of the Brownian motion and \cite[Section~18]{Billi_alt}, we obtain the equivalence $(c) \Leftrightarrow(c_0)$.
We denote by $d_0$ the Skorokhod metric on $D([0,1])$ which makes it a
Polish space.
The Skorokhod--Dudley Theorem \cite[Theorem~4.30]{Kallenberg} and
$(c_0)$ imply
\[
d_0 \bigl(\bigl(Z^n_t\bigr)_{t \in[0,1]},
(W_t)_{t \in[0,1]} \bigr) \rightarrow \xch{0,}{0}
\]
almost surely on an appropriate probability space. Since the uniform
topology is finer than the Skorokhod topology (\cite[Section~18]{Billi_alt}), we conclude assertion $(d)$. Thus it remains to prove
$(a) \Rightarrow(c_0)$.
Firstly we consider finite-dimensional distributions. Due to \cite
[Lemma~3.2]{Gine}, the sequence $(b_n)_{n \in\N}$ with
$S_n/b_n \stackrel{d}{\rightarrow} \mathcal{N}(0,1)$ fulfills $V_n/b_n
\rightarrow1$ in probability and $b_n = \sqrt{n} L(n)$ for some slowly
varying at infinity function $L$. The continuous mapping theorem
implies $b_n/V_n \rightarrow1$ in probability. Take arbitrary $N \in\N
$, $a_1,\ldots, a_N \in\R$ and $t_1,\ldots, t_N \in[0,1]$. Without
loss of generality, we assume $t_1< \cdots< t_N$ and denote $t_0:=0$
and $t_{N+1}:=1$. Then, by the independence of the random variables
$S_{\lfloor n t_i \rfloor} - S_{\lfloor n t_{i-1} \rfloor}$,
$i =1, \ldots, N+1$,
for every fixed $n \in\N$, L\'evy's continuity theorem
and the normality of the random vector $(Y_1, \ldots, Y_{N+1})$, we obtain
\[
\biggl(\frac{S_{\lfloor n t_1 \rfloor} - S_{\lfloor n t_{0} \rfloor
}}{\sqrt{(\lfloor n t_1 \rfloor)}}, \ldots, \frac{S_{\lfloor n t_{N+1}
\rfloor} - S_{\lfloor n t_{N} \rfloor}}{\sqrt{(\lfloor n t_{N+1}
\rfloor- \lfloor n t_{N} \rfloor)}} \biggr) \stackrel{d} {
\rightarrow} (Y_1, \ldots, Y_{N+1}),
\]
as $n$ tends to infinity. As the sequence $(b_n)_{n \in\N}$ is
regularly varying with exponent $1/2$, it is easily seen that
\[
\frac{b_{\lfloor n t_i \rfloor- \lfloor n t_{i-1} \rfloor}}{b_n} \rightarrow\sqrt{t_i - t_{i-1}}.
\]
Via the continuous mapping theorem, we conclude
\begin{align*}
\sum_i a_i \frac{S_{\lfloor n t_i \rfloor}}{b_{n}} &=
\sum_{i=1}^{N+1} \frac{ (\sum_{j \leq i} a_j ) (b_{\lfloor n t_i \rfloor-
\lfloor n t_{i-1} \rfloor})}{b_n} \biggl(
\frac{S_{\lfloor n t_i \rfloor}
- S_{\lfloor n t_{i-1} \rfloor}}{b_{\lfloor n t_i \rfloor- \lfloor n
t_{i-1} \rfloor}} \biggr)
\\
&\stackrel{d} {\rightarrow} \sum_{i=1}^{N+1}
\biggl(\sum_{j \leq i} a_j \biggr)
\sqrt{t_i - t_{i-1}} Y_i \stackrel{d} {=} \sum
_{i=1}^{N+1} a_i
W_{t_i}.
\end{align*}
Slutsky's theorem implies
\[
\sum_{i=1}^{N+1} a_i
Z^n_{t_i} = \biggl(\frac{b_n}{V_n} \biggr) \biggl(\sum
_i a_i \frac{S_{\lfloor n t_i \rfloor}}{b_{n}} \biggr)
\stackrel{d} {\rightarrow} \sum_{i=1}^{N+1}
a_i W_{t_i},
\]
what means the convergence of finite-dimensional distributions.

The tightness follows again by the criterion \cite[Theorem~15.6]{Billi_alt}. By the identical distribution, for all $m \leq n$, we have
\begin{align}
\label{eq:ThmCsorgo1} \ex \biggl[ \biggl(\frac{\sum_{i \leq m} X_i^2}{\sum_{i\leq n} X_i^2} \biggr)^2
\biggr] = \ex \biggl[\frac{m X_1^4}{(\sum_{i\leq n} X_i^2)^2} \biggr] + \ex \biggl[\frac{m(m-1) X_1^2 X_2^2}{(\sum_{i\leq n} X_i^2)^2}
\biggr].
\end{align}
Thanks to the value $1$ on the left hand side in \eqref{eq:ThmCsorgo1}
for $m=n$, we conclude
\begin{align*}
0 \leq\ex \biggl[\frac{X_1^2 X_2^2}{(\sum_{i\leq n} X_i^2)^2} \biggr] \leq\frac{1}{n(n-1)}.
\end{align*}
In contrast to \eqref{eq:ThmPoincareBm1}, for possibly nonsymmetric
random variables, the Cauchy--Schwarz inequality and \cite
[(3.10)]{Gine} yields a constant $c_X<\infty$ such that for every $r\in
\{2,3,4\}$,
\begin{align}
\label{eq:ThmCsorgo2}
\max_{\substack{i,j,k,l \leq n\\ |\{i,j,k,l\}| =r}} \ex \biggl[\frac
{|X_iX_jX_k X_l|}{(\sum_{i\leq n} X_i^2)^2} \biggr]
\leq c_X n^{-r}.
\end{align}
Applying the estimates in \eqref{eq:ThmCsorgo2} on the terms in \eqref
{eq:ThmPoincareBm0} gives that
\begin{align}
\label{eq:ThmCsorgo3} \max_{i,j \in\{1,2\}} \ex \bigl[ I^{t,u}_i
I^{u,s}_j \bigr] \leq c_X \biggl(
\frac{\lfloor nt \rfloor-\lfloor ns \rfloor}{n} \biggr)^2.
\end{align}
Hence, we obtain
\[
\ex \bigl[\bigl(Z^n_t - Z^n_u
\bigr)^2 \bigl(Z^n_u - Z^n_s
\bigr)^2 \bigr] = \ex\bigl[\bigl(I^{t,u}_1 +
I^{t,u}_2\bigr) \bigl(I^{u,s}_1 +
I^{u,s}_2\bigr)\bigr]\leq4 c_X
\xch{\biggl(\frac{\lfloor nt \rfloor-\lfloor ns \rfloor}{n} \biggr)^2,}%
{\biggl(\frac{\lfloor nt \rfloor-\lfloor ns \rfloor}{n} \biggr)^2}
\]
and the proof concludes as in Theorem~\ref{thm:Poincare_Bm}.
\end{proof}

\begin{remark}
(i) By the same reasoning, we obtain Theorem~\ref {thm:PoincareSpheres} for
the sequence of i.i.d. variables $X, X_1, X_2, \ldots$ such that
Theorem~\ref{thm:Csorgo} $(a)$ is fulfilled.

(ii) In \cite{Csorgo2}, a similar counterpart of Theorem~\ref {thm:Csorgo}
for $\alpha$-stable L\'evy processes is established. An interesting question
would be on a uniqueness result similar to Theorem~\ref{thm:PoincareSpheres}.
\end{remark}

\section*{Acknowledgments}
The author thanks the anonymous referees for careful reading and
helpful comments.

\end{document}